\documentclass[letterpaper]{article} %

\usepackage[utf8]{inputenc} 
\usepackage[T1]{fontenc}    
\usepackage{hyperref}       
\usepackage{url}            
\usepackage{booktabs}       
\usepackage{amsfonts}       
\usepackage{microtype}      
\usepackage{color,xcolor}         

\usepackage{natbib} 
\usepackage{mathtools}
\newcommand{\eqdef}{\coloneqq}

\usepackage{fullpage}

\usepackage{mathtools}   
\usepackage{algorithm}
\usepackage{algorithmic}

\usepackage{xspace}
\usepackage{scalefnt}

\usepackage{amsmath, amsfonts,amssymb,amsthm}
\usepackage{cite}
\DeclareMathOperator*{\argmin}{arg\,min}

\newcommand{\sqn}[1]{\left\| #1 \right\|^2}
\newcommand{\Exp}[1]{\mathbb{E}\!\left[ #1 \right]} 
\newcommand{\Expb}[2]{\mathbb{E}\!\left[#1\ \vert\ #2\right]}
\newcommand{\Expc}[1]{\Expb{#1}{\mathcal{F}^t}}

\newtheorem{theorem}{Theorem}
\newtheorem{proposition}{Proposition}
\newtheorem{corollary}{Corollary}

\usepackage{xspace}
\usepackage{scalefnt}
\definecolor{mydarkred}{rgb}{0.8,0.0,0.0}
\newcommand{\algn}[1]{{\sf\color{mydarkred}\scalefont{0.955}{#1}}\xspace}
\newcommand{\algno}{\algn{Point-SAGA}}

\title{\textbf{A Simple Linear Convergence Analysis \\of the Point-SAGA Algorithm}}

\author{%
 Laurent Condat \&  Peter Richt\'{a}rik\\
Computer Science Program, CEMSE Division,\\ King Abdullah University of Science and Technology (KAUST)\\ Thuwal, 23955-6900, Kingdom of Saudi Arabia\\
 \& SDAIA-KAUST Center of Excellence in Data Science and \\Artificial Intelligence 
(SDAIA-KAUST AI)\\
Contact: see \texttt{https://lcondat.github.io/} 
}

\date{May 30, 2024}

\begin{document}

\maketitle

\begin{abstract}
Point-SAGA is a randomized algorithm for minimizing a sum of convex functions using their proximity operators (proxs), proposed by \citet{def16}. At every iteration, the prox of only one randomly chosen function is called. We generalize the algorithm to any number of prox calls per iteration, not only one, and propose a simple proof of linear convergence when the functions are smooth and strongly convex. 
\end{abstract}

\section{Introduction}

Optimization problems arise in machine learning, signal and image processing, control, statistics, and many other fields \citep{pal09, sra11, bac12, cev14, pol15, bub15, glo16, cha16, sta16}. 
Stochastic Gradient Descent algorithms are well suited to minimize a sum of many smooth functions. They proceed by calling at every iteration the gradients of some randomly chosen functions, instead of all of them
\citep{Bottou2012,gow20a,gor202}. 
On the other hand, proximal algorithms make calls to the proximity operators (proxs) of the functions, instead of their gradients \citep{com10,par14,bec17,con19,com21,con22}. We recall that for any function $\phi$, the prox of $ \phi$ is 
 $
 \mathrm{prox}_{\phi}: x\in\mathcal{X} \mapsto \argmin_{x'\in\mathcal{X}}\big(\phi(x') + \frac{1}{2} \|x'-x\|^2\big)
  $ \citep{bau17}. 
  This operator has a closed form for many functions of practical interest \citep{par14,pus17,ghe18}, see also the website \url{http://proximity-operator.net}.

The optimization problem we consider is 
 the minimization of a sum of $n\geq 1$ convex functions $f_i$, $i\in[n]\eqdef\{1,\ldots,n\}$, over a finite-dimensional  real Hilbert space 
$\mathcal{X}$:
\begin{equation}
\mathrm{Find} \ x^\star \in \argmin_{x\in\mathcal{X}}  \sum_{i=1}^n f_i(x).\label{eqpb0}
 \end{equation}

We assume that for every $i\in [n]$, $f_i$ is $\mu$-strongly convex, for some  $\mu> 0$; that is $f_i-\frac{\mu}{2}\|\cdot\|^2$ is convex. Hence, the solution $x^\star$ of \eqref{eqpb0} exists and is unique. 

We also assume that for every $i\in [n]$, $f_i$ is $L$-smooth, for some $L\geq \mu$; that is, $f_i:\mathcal{X}\rightarrow \mathbb{R}$ is differentiable and its gradient is $L$-Lipschitz continuous: 
 for every $(x,y)\in\mathcal{X}^2$, 
\begin{equation*}
\|\nabla f_i(x)-\nabla f_i(y)\|\leq L \|x-y\|.\label{eqdef1}
\end{equation*}
  
\algno is a randomized proximal algorithm to solve \eqref{eqpb0}, which calls at every iteration the prox of only one randomly chosen  function $f_i$ \citep{def16}. In this paper, we generalize \algno to a minibatch version with any number $s\in[n]$ of activated functions per iteration, not only $s=1$. 
We prove its linear convergence in the considered setting where the functions are smooth and strongly convex. Our proof is simple and short, and our rate when $s=1$ is better than the one in \citet{def16}.

We leave it for future work to extend the setting to functions with different smoothness and strong convexity constants $L_i$ and $\mu_i$, along with random sampling strategies more general than uniform sampling. It would also be interesting to study convergence  in the general convex case where the functions $f_i$ are neither smooth nor strongly convex. A randomized primal--dual algorithm for a more general class of problems has been proposed in \citet{con22rp}, with a convergence analysis under different assumptions. \algno has been studied under a similarity assumption different from smoothness in \citet{sad24}.

 \begin{figure*}[t]	
\begin{algorithm}[H]
		\caption{\algno}\label{alg1}
		\begin{algorithmic}[1]
			\STATE  \textbf{input:} initial estimate $x^0\in\mathcal{X}$, initial gradient estimates $(g_i^0)_{i=1}^n\in\mathcal{X}^n$;
			\STATE stepsize $\gamma>0$; minibatch size $s\in [n]$
			\STATE $g^0\coloneqq \frac{1}{n}\sum_{i=1}^n g_i^0$
			\FOR{$t=0, 1, \ldots$}
			\STATE pick a subset $\Omega^t\subset [n]$ of size $s$ uniformly at random
			\FOR{$i\in \Omega^t$}
			\STATE $z_i^{t}\coloneqq x^t + \gamma g_i^t-\gamma g^t $
			\STATE $x_i^{t+1}\coloneqq \mathrm{prox}_{\gamma f_i} (z_i^t)$
			\STATE $g_i^{t+1}\coloneqq \frac{1}{\gamma} (z_i^t - x_i^{t+1})\quad//=\nabla f_i(x_i^{t+1})$
			\ENDFOR
			\FOR{$i\in[n]\backslash\Omega^t$} 
			\STATE $g_i^{t+1}\coloneqq g_i^t$
			\ENDFOR
			\STATE $x^{t+1}\eqdef \frac{1}{s}\sum_{i\in \Omega^t} x_i^{t+1}$
			\STATE $g^{t+1}\coloneqq \frac{n-s}{n}g^t + \frac{s}{n\gamma}(x^t-x^{t+1})\quad//=\frac{1}{n} \sum_{i=1}^n g_i^{t+1}$
			\ENDFOR
		\end{algorithmic}
		\end{algorithm}\end{figure*}

\section{The \algno algorithm}

Our proposed generalized version of \algno is shown as Algorithm~\ref{alg1}. It proceeds as follows. At every iteration $t\geq 0$, a subset $\Omega^t\subset [n]$ of size $s\in[n]$ is chosen uniformly at random (\citet{def16} only considered the case $s=1$). Then the proxs of the functions $f_i$ are called, for every $i\in\Omega^t$. Their outputs $x_i^{t+1}$ are averaged to form the next estimate $x^{t+1}$. 
A table $(g_i)_{i=1}^n$ of estimates of the gradients $\nabla f_i(x^\star)$ is maintained. $g_i^{t+1}$ is updated to  $\nabla f_i(x_i^{t+1})$ for every $i\in\Omega^t$ and the other gradient estimates remain unchanged. There is actually no need to evaluate any gradient: an important property of the prox is that $x_i^{t+1}=\mathrm{prox}_{\gamma f_{i}}(z_i^t)$ satisfies \citep{bau17}
\begin{equation}
x_i^{t+1}+\gamma \nabla f_{i}(x_i^{t+1})=z_i^t,\label{eqp1}
\end{equation}
so that $\nabla f_{i}(x_i^{t+1})$ is equal to $(z_i^t - x_i^{t+1})/\gamma$. The average $g^t = \frac{1}{n} \sum_{i=1}^n g_i^{t}$ is maintained. Updating $g^{t+1}$ by explicitly computing $\sum_{i\in \Omega^t} g_i^{t+1}$ could be costly if $s$ is large. Instead, there is a simple way to compute $g^{t+1}$:
\begin{proposition}\label{lem1}
In \algno, for every $t\geq 0$, $g^{t+1}\coloneqq \frac{n-s}{n}g^t + \frac{s}{n\gamma}(x^t-x^{t+1})$ is equal to $\frac{1}{n} \sum_{i=1}^n g_i^{t+1}$. 
\end{proposition}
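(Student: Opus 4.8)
The plan is to establish the identity by a direct computation, splitting the sum $\frac{1}{n}\sum_{i=1}^n g_i^{t+1}$ according to whether the index belongs to the active minibatch $\Omega^t$ or to its complement $[n]\backslash\Omega^t$, and then exploiting a cancellation between the two halves.

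First I would handle the indices outside the minibatch. Since the algorithm sets $g_i^{t+1}=g_i^t$ for every $i\in[n]\backslash\Omega^t$, I would write $\sum_{i\notin\Omega^t} g_i^{t+1}=\sum_{i=1}^n g_i^t-\sum_{i\in\Omega^t} g_i^t = n\,g^t-\sum_{i\in\Omega^t} g_i^t$, using the maintained relation $g^t=\frac{1}{n}\sum_{i=1}^n g_i^t$.

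Next I would rewrite the active terms. For $i\in\Omega^t$, the update in the algorithm gives $g_i^{t+1}=\frac{1}{\gamma}(z_i^t-x_i^{t+1})$, and substituting the definition $z_i^t=x^t+\gamma g_i^t-\gamma g^t$ yields $g_i^{t+1}=\frac{1}{\gamma}(x^t-x_i^{t+1})+g_i^t-g^t$. Summing this over $i\in\Omega^t$ and using the defining relation $\sum_{i\in\Omega^t} x_i^{t+1}=s\,x^{t+1}$ of the averaged iterate gives $\sum_{i\in\Omega^t} g_i^{t+1}=\frac{s}{\gamma}(x^t-x^{t+1})+\sum_{i\in\Omega^t} g_i^t-s\,g^t$.

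Finally, adding the two partial sums, I expect the terms $\sum_{i\in\Omega^t} g_i^t$ to cancel exactly, leaving $\sum_{i=1}^n g_i^{t+1}=\frac{s}{\gamma}(x^t-x^{t+1})+(n-s)\,g^t$; dividing by $n$ then reproduces the claimed formula. There is no substantive obstacle here — the argument is purely algebraic — and the only step requiring any care is the substitution of $z_i^t$, which is precisely what causes the stored gradients $g_i^t$ on the active set to cancel against their reappearance in the complementary sum.
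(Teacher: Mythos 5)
Your proposal is correct and is essentially the paper's own argument: a direct algebraic computation in which the substitution $z_i^t=x^t+\gamma g_i^t-\gamma g^t$ makes the stored gradients $\sum_{i\in\Omega^t} g_i^t$ cancel, leaving $\frac{n-s}{n}g^t+\frac{s}{n\gamma}(x^t-x^{t+1})$. The only presentational difference is that the paper frames the ``maintained relation'' $g^t=\frac{1}{n}\sum_{i=1}^n g_i^t$ explicitly as an induction (with base case $g^0=\frac{1}{n}\sum_{i=1}^n g_i^0$), which is what your appeal to that relation implicitly requires.
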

\begin{proof}
We prove the property by recurrence: let $t\geq 0$ and let us assume that $g^t=\frac{1}{n} \sum_{i=1}^n g_i^{t}$. Then
\begin{align*}
\frac{1}{n} \sum_{i=1}^n g_i^{t+1}&=g^t + \frac{1}{n} \sum_{i\in\Omega^t} g_i^{t+1}-g_i^t=g^t + \frac{1}{n} \sum_{i\in\Omega^t} \frac{1}{\gamma} (z_i^t - x_i^{t+1})-g_i^t\\
&=g^t + \frac{1}{n} \sum_{i\in\Omega^t} \frac{1}{\gamma} (x^t -\gamma g^t + \gamma g_i^t - x_i^{t+1})-g_i^t=g^t + \frac{s}{n\gamma } x^t - \frac{s}{n} g^t- \frac{1}{\gamma n} \sum_{i\in\Omega^t} x_i^{t+1}\\
&= \frac{n-s}{n}g^t + \frac{s}{n\gamma } x^t - \frac{s}{n\gamma } x^{t+1}=g^{t+1}.
\end{align*}
Since $g^0$ is set as $\frac{1}{n}\sum_{i=1}^n g_i^0$, the property is proved.
\end{proof}

\section{Linear Convergence Result}

\begin{theorem}
    \label{th1}
  In \algno with any stepsize $\gamma>0$, minibatch size $s\in[n]$, and initial estimates $x^0$, $g^0_1$,\ldots,$g^0_n \in \mathcal{X}$, we have, for every $t\geq 0$, \begin{equation}
\Exp{\Psi^t}\leq \max\left\{
1-\frac{1}{1 + \frac{L + \mu}{2\gamma\mu L}}
,1-\frac{1}{1+\gamma \frac{L+\mu}{2}}\frac{s}{n}
\right\}^k\Psi^0,\label{eqr1}
\end{equation}
where
\begin{align}
\Psi^t&\eqdef \left(1+\frac{2\gamma\mu L}{L+\mu}\right)s\Expc{\sqn{x^{t+1}-x^\star}}+\left(1+\frac{2}{\gamma(L+\mu)}\right)\gamma^2
\Expc{ \sum_{i=1}^n \sqn{g_i^{t+1}-\nabla f_i(x^\star)}}.
\end{align}
In addition, $(x^t)_{t\in\mathbb{N}}$ converges  to $x^\star$ and, for every $i\in[n]$, $(g_i^t)_{t\in\mathbb{N}}$ converges  to $\nabla f_{i}(x^\star)$,
almost surely.
\end{theorem}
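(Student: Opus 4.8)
The plan is to prove the one-step contraction $\Expc{W^{t+1}}\le\rho\,W^t$, where $W^t$ denotes the quantity obtained from $\Psi^t$ by replacing every superscript $t+1$ with $t$ and removing the conditional expectations, so that $\Psi^t=\Expc{W^{t+1}}$ by linearity; the geometric decay of $\Exp{\Psi^t}$ in \eqref{eqr1} (with exponent $t$) then follows from the tower rule and induction, since $\Exp{\Psi^t}=\Exp{W^{t+1}}$. Writing $\alpha\eqdef 1+\frac{2\gamma\mu L}{L+\mu}$ and $\beta\eqdef\gamma^2+\frac{2\gamma}{L+\mu}$ for the coefficients appearing in $\Psi^t$, I set $\rho\eqdef\max\{1/\alpha,\,C/\beta\}$ with $C\eqdef\frac{s}{n}\gamma^2+\frac{n-s}{n}\beta$, and would first verify by elementary algebra that $1/\alpha$ and $C/\beta$ are exactly the two entries of the maximum in \eqref{eqr1}.

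The analytic heart of the argument is the interpolation inequality valid for any function that is both $\mu$-strongly convex and $L$-smooth: for all $x,y$, $\langle\nabla f_i(x)-\nabla f_i(y),x-y\rangle\ge\frac{\mu L}{\mu+L}\sqn{x-y}+\frac{1}{\mu+L}\sqn{\nabla f_i(x)-\nabla f_i(y)}$. I would combine this with the prox identity \eqref{eqp1}, applied both at $z_i^t$ and at $x^\star$ (where \eqref{eqp1} gives $\mathrm{prox}_{\gamma f_i}(x^\star+\gamma\nabla f_i(x^\star))=x^\star$), to subtract the two characterizations and obtain, for each $i\in\Omega^t$,
\begin{equation*}
\sqn{(x^t-x^\star)+\gamma\big((g_i^t-\nabla f_i(x^\star))-g^t\big)}\ge\alpha\sqn{x_i^{t+1}-x^\star}+\beta\sqn{g_i^{t+1}-\nabla f_i(x^\star)},
\end{equation*}
because the left-hand side is exactly $\sqn{z_i^t-x^\star-\gamma\nabla f_i(x^\star)}=\sqn{(x_i^{t+1}-x^\star)+\gamma(g_i^{t+1}-\nabla f_i(x^\star))}$ and the interpolation inequality converts the cross term into precisely the coefficients $\alpha$ and $\beta$ of $\Psi^t$. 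Engineering this exact match is the point of the construction, so this step must be done carefully.

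Next I would sum over $i\in\Omega^t$ and take $\Expc{\cdot}$. Three elementary facts do the bookkeeping: (i) Jensen's inequality gives $s\,\sqn{x^{t+1}-x^\star}\le\sum_{i\in\Omega^t}\sqn{x_i^{t+1}-x^\star}$ since $x^{t+1}$ is the average of the $x_i^{t+1}$; (ii) the gradient table splits as $\sum_{i=1}^n\sqn{g_i^{t+1}-\nabla f_i(x^\star)}=\sum_{i\in\Omega^t}\sqn{g_i^{t+1}-\nabla f_i(x^\star)}+\sum_{i\notin\Omega^t}\sqn{g_i^t-\nabla f_i(x^\star)}$; and (iii) under uniform sampling of a size-$s$ subset, $\Expc{\sum_{i\in\Omega^t}a_i}=\frac{s}{n}\sum_{i=1}^n a_i$ and $\Expc{\sum_{i\notin\Omega^t}a_i}=\frac{n-s}{n}\sum_{i=1}^n a_i$ for any $\mathcal{F}^t$-measurable $(a_i)$. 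Combining (i)--(iii) recognizes $\alpha s\,\Expc{\sqn{x^{t+1}-x^\star}}+\beta\,\Expc{\sum_i\sqn{g_i^{t+1}-\nabla f_i(x^\star)}}=\Psi^t$ on the right-hand side and leaves the complement term $\beta\frac{n-s}{n}\sum_i\sqn{g_i^t-\nabla f_i(x^\star)}$ as an additive correction.

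Finally I would evaluate the averaged left-hand side. As all quantities are $\mathcal{F}^t$-measurable, $\Expc{\sum_{i\in\Omega^t}\sqn{(x^t-x^\star)+\gamma((g_i^t-\nabla f_i(x^\star))-g^t)}}=\frac{s}{n}\sum_{i=1}^n\sqn{\cdots}$; expanding the square, the cross term vanishes because $\sum_i\big((g_i^t-\nabla f_i(x^\star))-g^t\big)=0$ (recall $g^t=\frac1n\sum_i g_i^t$ by Proposition~\ref{lem1} and $\sum_i\nabla f_i(x^\star)=0$), and dropping the nonpositive remainder $-s\gamma^2\sqn{g^t}$ yields the upper bound $\frac{s}{n}\big(n\,\sqn{x^t-x^\star}+\gamma^2\sum_i\sqn{g_i^t-\nabla f_i(x^\star)}\big)$. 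Collecting terms gives $\Psi^t\le s\,\sqn{x^t-x^\star}+C\sum_i\sqn{g_i^t-\nabla f_i(x^\star)}$, and comparing coefficient-by-coefficient with $W^t=\alpha s\,\sqn{x^t-x^\star}+\beta\sum_i\sqn{g_i^t-\nabla f_i(x^\star)}$ yields $\Psi^t\le\rho\,W^t$, hence \eqref{eqr1}. For the almost-sure claim I would note $\rho<1$ (since $\alpha>1$ and $C<\beta$, the latter because $\beta-C=\frac{s}{n}(\beta-\gamma^2)>0$), so $\sum_t\Exp{W^t}<\infty$; nonnegativity of $W^t$ and Tonelli then force $W^t\to0$ almost surely, which gives the convergence of $x^t$ to $x^\star$ and of each $g_i^t$ to $\nabla f_i(x^\star)$. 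The main obstacle I anticipate is organizational rather than conceptual: keeping the sampling expectation, the Jensen step, and the table split aligned so that the coefficients close exactly onto $\Psi^t$.
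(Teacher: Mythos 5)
Your proposal is correct and follows essentially the same route as the paper's proof: the same interpolation inequality for $\mu$-strongly convex, $L$-smooth functions combined with the prox identity \eqref{eqp1}, the same Jensen step, gradient-table split, and uniform-sampling expectation, with the cross term killed by $\sum_i \nabla f_i(x^\star)=0$, yielding the same constants; your recursion $\Expc{W^{t+1}}\le\rho\,W^t$ is just the paper's $\Expc{\Psi^{t+1}}\leq \rho\,\Psi^t$ rewritten via the identity $\Psi^t=\Expc{W^{t+1}}$. The only minor divergence is the almost-sure convergence step, where you use geometric summability of $\Exp{W^t}$ plus Tonelli rather than the paper's appeal to a supermartingale convergence theorem; both are valid, and yours is arguably the more elementary argument.
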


\begin{corollary}
The iteration complexity of \algno, i.e., the  number of iterations to achieve $\Exp{\Psi^t}\leq \epsilon$ for any $\epsilon>0$, is 
\begin{equation}
\mathcal{O}\left(\left(\frac{1}{\gamma\mu }+\frac{(\gamma L +1)n}{s}\right)\log\left(\frac{\Psi^0}{\epsilon}\right)\right).
\end{equation}
Therefore, by choosing 
\begin{equation}
\gamma = \sqrt{\frac{s}{L\mu n}},
\end{equation}
the complexity of \algno is 
\begin{equation}
\mathcal{O}\left(\left(\sqrt{\frac{L n}{\mu s}}+\frac{n}{s}\right)\log\left(\frac{\Psi^0}{\epsilon}\right)\right).
\end{equation}
\end{corollary}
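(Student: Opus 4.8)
The plan is to convert the geometric-decay bound of Theorem~\ref{th1} into an iteration count by taking logarithms, and then to simplify the resulting expression using $L\ge\mu>0$ and $s\le n$. Write the contraction factor appearing in \eqref{eqr1} as $\rho=1-r$, where
\[
r \eqdef \min\left\{\frac{1}{1+\frac{L+\mu}{2\gamma\mu L}},\ \frac{s}{n}\,\frac{1}{1+\gamma\frac{L+\mu}{2}}\right\}\in(0,1),
\]
so that (reading the exponent in \eqref{eqr1} as $t$) we have $\Exp{\Psi^t}\le\rho^t\Psi^0$. To force $\rho^t\Psi^0\le\epsilon$ it suffices that $t\,\log(1/\rho)\ge\log(\Psi^0/\epsilon)$; since $\log(1/\rho)=-\log(1-r)\ge r$ for $r\in(0,1)$, the condition $t\ge\frac1r\log(\Psi^0/\epsilon)$ is enough, giving an iteration complexity of $\mathcal{O}\big(\frac1r\log(\Psi^0/\epsilon)\big)$.

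It then remains to estimate $1/r$. First I would use $\max\{a,b\}\le a+b\le 2\max\{a,b\}$ for positive $a,b$ to replace $1/r=\max\{\,\cdot\,,\,\cdot\,\}$ by the \emph{sum} of the two reciprocals up to a constant factor, i.e.
\[
\frac1r = \Theta\!\left(1+\frac{L+\mu}{2\gamma\mu L}+\frac ns\Big(1+\gamma\frac{L+\mu}{2}\Big)\right).
\]
Since $L\le L+\mu\le 2L$, the factor $\frac{L+\mu}{2}$ is $\Theta(L)$, whence $\frac{L+\mu}{2\gamma\mu L}=\Theta(\frac1{\gamma\mu})$ and $\gamma\frac{L+\mu}{2}=\Theta(\gamma L)$; the stray $1$'s are dominated by $\frac ns\ge1$. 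This collapses $1/r$ to $\Theta\big(\frac1{\gamma\mu}+\frac{(\gamma L+1)n}{s}\big)$ and yields the first displayed complexity.

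For the second bound I would substitute $\gamma=\sqrt{s/(L\mu n)}$ into the first. A one-line computation gives $\frac1{\gamma\mu}=\sqrt{Ln/(\mu s)}$ and $\frac{\gamma L n}{s}=\sqrt{Ln/(\mu s)}$, so the two $\gamma$-dependent terms coincide---indeed this $\gamma$ is exactly the minimizer of $\gamma\mapsto\frac1{\gamma\mu}+\frac{\gamma Ln}{s}$---while the $\gamma$-free term $\frac ns$ is unchanged; summing yields $\mathcal{O}\big(\sqrt{Ln/(\mu s)}+\frac ns\big)$ inside the logarithm, as claimed. I expect no genuine analytic obstacle here: the only step warranting care is the constant-tracking in passing from $1/r=\max\{\,\cdot\,,\,\cdot\,\}$ to the additive form---namely checking that the $1$'s are absorbed by $n/s\ge1$ and that $L+\mu=\Theta(L)$ collapses the first reciprocal to $1/(\gamma\mu)$. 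Beyond that, the result rests solely on the inequality $-\log(1-r)\ge r$ together with elementary algebra.
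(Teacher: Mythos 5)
Your proposal is correct: the paper states this corollary without proof, and your derivation (the standard one: $-\log(1-r)\ge r$ to convert the geometric rate of Theorem~\ref{th1} into $\mathcal{O}(\frac1r\log(\Psi^0/\epsilon))$ iterations, then $L\le L+\mu\le 2L$ and $n/s\ge 1$ to simplify, then substituting the balancing stepsize $\gamma=\sqrt{s/(L\mu n)}$) is exactly the intended argument. Your algebra checks out, including the key computation that $\frac{1}{\gamma\mu}=\frac{\gamma Ln}{s}=\sqrt{Ln/(\mu s)}$ at the chosen $\gamma$.
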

Thus,  \algno is an accelerated algorithm, as its iteration complexity with the appropriate stepsize depends on $\sqrt{L/\mu}$. 
If \algno  is run on a computing architecture with $m\geq 1$ machines computing the proxs in parallel, its computational complexity   is $\max(s/m,1)$ times the iteration complexity. Thus, $s=m$ is the best choice. 

We note that in the case $s=1$, our rate in \eqref{eqr1}  is smaller than the rate $\max\left\{\frac{1}{1+\gamma\mu},1-\frac{1}{(\gamma L+1)n}\right\}$ that appears implicitly in the proof of \citet[Theorem 5]{def16}, which only states linear convergence for a particular and complicated value of $\gamma$.  
We also note that when $s=n$, \algno is deterministic and reverts to the Douglas--Rachford algorithm applied to the problem $\min_{(x_1,\ldots,x_n)}\sum_{i=1}^n f_i(x_i)$ s.t.\ $x_1=\cdots=x_n$ \citep[Section 9]{con19}. When minimizing the sum of two convex functions, one of which is $L$-smooth and $\mu$-strongly convex, this algorithm has a rate $\max\left\{\frac{1}{1+\gamma\mu},1-\frac{1}{\gamma L+1}\right\}$, which is tight in general \citep[Theorems 2 and 3]{gis17}. It is interesting that for the particular problem we consider,  a better rate can be obtained.

\section{Proof of Theorem~\ref{th1}}

For every $i\in[n]$, we define $z_i^\star\eqdef x^\star+\gamma \nabla f_{i}(x^\star)$. 
Let $t\geq 0$ and $i\in\Omega^t$. According to \eqref{eqp1}, we have
\begin{equation*}
x_i^{t+1}-x^\star+\gamma \nabla f_{i}(x_i^{t+1})-\gamma \nabla f_{i}(x^\star)=z_i^t-z_{i}^\star.
\end{equation*}
Therefore,
\begin{align}
\sqn{x_i^{t+1}-x^\star+\gamma \nabla f_{i}(x_i^{t+1})-\gamma \nabla f_{i}(x^\star)}
&=\sqn{ x_i^{t+1}-x^\star}+\gamma^2\sqn{\nabla f_{i}(x_i^{t+1})-\nabla f_{i}(x^\star)}\notag\\
&\quad+2\gamma \left\langle \nabla f_{i}(x_i^{t+1})- \nabla f_{i}(x^\star),x_i^{t+1}-x^\star\right\rangle\label{eq27}\\
&=\sqn{ z_i^t- z_i^\star}.\notag
\end{align}
According to \citet[Lemma 3.11]{bub15}, 
since $f_i$ is $\mu$-strongly convex and $L$-smooth, it satisfies
\begin{align*}
\left\langle \nabla f_i(x_i^{t+1})- \nabla f_i(x^\star),x_i^{t+1}-x^\star\right\rangle &\geq \frac{\mu L}{L+\mu}\sqn{x_i^{t+1}-x^\star}+\frac{1}{L+\mu}\sqn{ \nabla f_i(x_i^{t+1})- \nabla f_i(x^\star)}.
\end{align*}
Using this inequality in \eqref{eq27}, we obtain
\begin{align}
\left(1+\frac{2\gamma\mu L}{L+\mu}\right)\sqn{ x_i^{t+1}-x^\star}+\left(\gamma^2+\frac{2\gamma}{L+\mu}\right)
\sqn{ \nabla f_i(x_i^{t+1})- \nabla f_i(x^\star)}
&\leq \sqn{ z_i^t- z_i^\star}.\label{eq40}
\end{align}
Replacing $ \nabla f_i(x_i^{t+1})$ by $g_i^{t+1}$ in \eqref{eq40} and summing over $i\in\Omega^t$ yields
\begin{align}
\left(1+\frac{2\gamma\mu L}{L+\mu}\right)\sum_{i\in\Omega^t}\sqn{ x_i^{t+1}-x^\star}+\left(1+\frac{2}{\gamma(L+\mu)}\right)\gamma^2\sum_{i\in\Omega^t}
\sqn{g_i^{t+1}- \nabla f_i(x^\star)}
&\leq\sum_{i\in\Omega^t} \sqn{ z_i^t- z_i^\star}.\label{eq41}
\end{align}
We now take expectations to remove the dependence on the random subset $\Omega^t$. We denote by $\mathcal{F}^t$ the $\sigma$-algebra generated by the collection of random variables $(x^0,g^0_1,\ldots,g^0_n,\ldots, x^t,g^t_1,\ldots,g^t_n)$.

We first evaluate the conditional expectation of the right-hand side of \eqref{eq41}:
\begin{align}
\Expc{ \sum_{i\in\Omega^t}\sqn{ z_i^t- z_{i}^\star}}&=\frac{s}{n} \sum_{i=1}^n \sqn{ x^t -x^\star - \gamma g^t + \gamma g_i^t - \gamma \nabla f_i(x^\star)}\notag\\
&=s\sqn{ x^t -x^\star}+\frac{s}{n}\gamma^2 \sum_{i=1}^n \sqn{   g_i^t -\nabla f_i(x^\star)- g^t }\notag\\
&\quad -\frac{s\gamma}{n} \left\langle x^t -x^\star,  \underbrace{\sum_{i=1}^n (g_i^t -\nabla f_i(x^\star)- g^t)}_0\right\rangle\notag\\
&=s\sqn{ x^t -x^\star}+\frac{s}{n}\gamma^2 \sum_{i=1}^n \sqn{   g_i^t -\nabla f_i(x^\star) }-s\gamma^2\sqn{g^t}\notag\\
&\leq s\sqn{ x^t -x^\star}+\frac{s}{n}\gamma^2 \sum_{i=1}^n \sqn{   g_i^t -\nabla f_i(x^\star) }.\label{eql1}
\end{align}
We now look at the conditional expectation of the left-hand side of \eqref{eq41}. First, we note that
 \begin{equation*}
\sqn{x^{t+1}-x^\star}=\sqn{\frac{1}{s}\sum_{i\in \Omega^t} (x_i^{t+1}-x^\star)}\leq \frac{1}{s}\sum_{i\in \Omega^t} \sqn{ x_i^{t+1}-x^\star},
\end{equation*}
so that
 \begin{equation}
s\Expc{\sqn{x^{t+1}-x^\star}}\leq \Expc{\sum_{i\in \Omega^t} \sqn{ x_i^{t+1}-x^\star}}.\label{eqa21}
\end{equation}
Second,
 \begin{align*}
\Expc{ \sum_{i=1}^n \sqn{g_i^{t+1}-\nabla f_i(x^\star)}}&= 
\Expc{ \sum_{i\in\Omega^t} \sqn{g_i^{t+1}-\nabla f_i(x^\star)}}+ \Expc{ \sum_{i\in[n]\backslash\Omega^t} \sqn{g_i^{t}-\nabla f_i(x^\star)}}\\
&=\Expc{ \sum_{i\in\Omega^t} \sqn{g_i^{t+1}-\nabla f_i(x^\star)}}+\frac{n-s}{n}\sum_{i=1}^n \sqn{g_i^{t}-\nabla f_i(x^\star)},
\end{align*}
so that
 \begin{align}
\Expc{ \sum_{i\in\Omega^t} \sqn{g_i^{t+1}-\nabla f_i(x^\star)}}&=\Expc{ \sum_{i=1}^n \sqn{g_i^{t+1}-\nabla f_i(x^\star)}}-  \frac{n-s}{n}\sum_{i=1}^n \sqn{g_i^{t}-\nabla f_i(x^\star)}.\label{eqa22}
\end{align}
Using \eqref{eql1}, \eqref{eqa21} and \eqref{eqa22} in \eqref{eq41}  yields
\begin{align*}
&\left(1+\frac{2\gamma\mu L}{L+\mu}\right)s\Expc{\sqn{x^{t+1}-x^\star}}+\left(1+\frac{2}{\gamma(L+\mu)}\right)\gamma^2
\Expc{ \sum_{i=1}^n \sqn{g_i^{t+1}-\nabla f_i(x^\star)}}\notag\\
&\leq s\sqn{ x^t -x^\star}+\left(\left(1+\frac{2}{\gamma(L+\mu)}\right)\frac{n-s}{n}+\frac{s}{n}\right)\gamma^2 \sum_{i=1}^n \sqn{   g_i^t -\nabla f_i(x^\star)}.
\end{align*}
Therefore,
 \begin{align}
\Expc{\Psi^{t+1}}&\leq \max\left\{\left(1+\frac{2\gamma\mu L}{L+\mu}\right)^{-1},\left(1+\frac{2}{\gamma(L+\mu)}\right)^{-1}\frac{s}{n}+\frac{n-s}{n}
\right\}\Psi^t\notag\\
&= \max\left\{\frac{L+\mu}{L+\mu + 2\gamma\mu L},\left(\frac{\gamma (L+\mu)}{\gamma(L+\mu)+2}-1\right)\frac{s}{n}+1
\right\}\Psi^t\notag\\
&= \max\left\{1-\frac{2\gamma\mu L}{L+\mu + 2\gamma\mu L},1-\frac{2}{\gamma(L+\mu)+2}\frac{s}{n}
\right\}\Psi^t.\label{eqff}
\end{align}
Using the tower rule, we can unroll this recursion to obtain the unconditional expectation of $\Psi^{t+1}$. Using classical results on supermartingale convergence \citep[Proposition A.4.5]{ber15}, it follows from \eqref{eqff} that $\Psi^t\rightarrow 0$ almost surely. Almost sure convergence of $x^t$ and the $g_i^t$ follows.

\section*{Acknowledgement}
This work was supported by the SDAIA-KAUST Center of Excellence in Data Science and Artificial Intelligence (SDAIA-KAUST AI).

\bibliographystyle{apalike}
\bibliography{IEEEabrv,biblio}

\end{document}